\newtheorem{theorem}{Theorem}[section]
\newtheorem{lemma} [theorem]{Lemma}
\begin{document}

\title{A Note on Deaconescu's Conjecture} 
\author[Sagar Mandal]{Sagar Mandal}
\address{Department of Mathematics and Statistics, Indian Institute of Technology Kanpur\\ Kalyanpur, Kanpur, Uttar Pradesh 208016, India}
\email{sagarmandal31415@gmail.com}

\maketitle
\let\thefootnote\relax
\footnotetext{\it Keywords: Euler’s totient function; Schemmel’s totient function; Deaconescu's conjecture}
\let\thefootnote\relax
\footnotetext{\it MSC2020: 11A25} 
\maketitle

\begin{abstract}
Hasanalizade \cite{new2022} studied Deaconescu's conjecture  for positive composite integer $n$. A positive composite integer $n\geq4$ is said to be a Deaconescu number if $S_2(n)\mid \phi(n)-1$. In this paper, we improve Hasanalizade's result by proving that a Deaconescu number $n$ must have at least seventeen distinct prime divisors, i.e., $\omega(n)\geq 17$ and must be strictly larger than $5.86\cdot10^{22}$. Further, we prove that if any Deaconescu number $n$ has all prime divisors greater than or equal to $11$, then $\omega(n)\geq p^{*}$, where $p^{*}$ is the smallest prime divisor of $n$ and if $n\in D_3$ then all the prime divisors of $n$ must be congruent to $2$ modulo $3$ and $\omega(n)\geq 48$.
\end{abstract}

\section{Introduction}
Lehmer \cite{Lehmer1932} conjectured that if
$\phi(n)\mid (n-1)$ (where $\phi(n)$ is the Euler’s totient function), then $n$ must be a prime number. A positive composite integer that satisfies the condition $\phi(n)\mid (n-1)$ is called a Lehmer number. Let $S_2(n)$ denote Schemmel’s totient function which is a multiplicative function defined by
$$
S_2(p^k) :=
\begin{cases}
0, & \text{if } p = 2 \\
p^{k-1} (p - 2), & \text{if } p > 2
\end{cases}
$$
where $p$ is a prime number and $k\in \mathbb{Z}^{+}$. Deaconescu \cite{Deacon2000} probably inspired by Lehmer considered Schemmel’s totient function and conjectured that for $n\geq 2$
$$S_2(n)\mid \phi(n)-1$$
if and only if $n$ is prime. A positive composite integer $n\geq4$ is said to be a Deaconescu number if $S_2(n)\mid \phi(n)-1$. Hasanalizade \cite{new2022} proved that a Deaconescu number $n$ must be an odd, squarefree positive integer such that $\omega(n)\geq 7$. Further, Hasanalizade provided an upper bound for $n$ using Nielsen's Lemma (\cite{Nielsen2015}, Lemma 1.4) and also proved that for a monic non-constant polynomial $P(X)\in \mathbb{Z}[X]$, there
are at most finitely many composite integers $n$ such that $S_2(n)\mid (\phi(n)-1)$ and $P(S_2(n))\equiv 0 \pmod{\phi(n)}$, an analogous result due to Hernandez and Luca (\cite{Hernandez2008}, Theorem 1).\\
Observe that, Deaconescu's conjecture says that for every $M\geq1$ the set $D_{M}$ (we are using notations used by Hasanalizade) of integers satisfying $MS_2(n)=\phi(n)-1$
contains only prime numbers. Hasanalizade proved that the conjecture is true for $M=1$, that is, the set $D_1$ contains only prime numbers (see \cite{new2022}, Lemma 1). Therefore, we consider $M\geq3$.\\
The following theorem describes the nature of a positive composite integer $n$ which belongs to $D_{3}$. 

\begin{theorem}\label{thm1}
 If $n$ is a Deaconescu number such that $n\in D_3$ then all the prime divisors of $n$ must be congruent to $2$ modulo $3$ and $\omega(n)\geq 48$.
\end{theorem}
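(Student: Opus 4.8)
\medskip
\noindent\textbf{Proof plan.} The plan is to handle the two assertions separately: the congruence restriction on the prime divisors is a short reduction modulo $3$, and the lower bound on $\omega(n)$ then follows from the size inequality forced on members of $D_3$ together with a finite computation over the primes in the residue class $2\pmod 3$.

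\emph{Step 1: each prime divisor of $n$ is $\equiv 2\pmod 3$.} By Hasanalizade's theorem $n$ is odd and squarefree, say $n=p_1\cdots p_k$ with the $p_i$ distinct odd primes and $k=\omega(n)$. From $3S_2(n)=\phi(n)-1$ we get $\phi(n)\equiv 1\pmod 3$, which already forces $p_i\not\equiv 1\pmod 3$ for every $i$, since otherwise $3\mid p_i-1\mid\phi(n)$. It remains to exclude $3\mid n$: if $3\mid n$ then $\phi(n)=2\,\phi(n/3)$, where every prime factor of $n/3$, being $\ne 3$ and $\not\equiv 1\pmod 3$, is $\equiv 2\pmod 3$, so $\phi(n/3)\equiv 1\pmod 3$ and hence $\phi(n)\equiv 2\pmod 3$, a contradiction. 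Therefore every $p_i\equiv 2\pmod 3$, and being an odd prime distinct from $3$, every $p_i\ge 5$.

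\emph{Step 2: $\omega(n)\ge 48$.} Dividing $3S_2(n)=\phi(n)-1$ by $S_2(n)=\prod_{i=1}^{k}(p_i-2)$ and using that $n$ is squarefree,
\[
\prod_{i=1}^{k}\Bigl(1+\frac{1}{p_i-2}\Bigr)=\frac{\phi(n)}{S_2(n)}=3+\frac{1}{S_2(n)}>3 .
\]
Let $q_1<q_2<\cdots$ enumerate the primes $\equiv 2\pmod 3$, so that $q_1=5$, $q_2=11$, $q_3=17,\dots$; by Step 1 the $p_i$ are among the $q_j$, hence $p_i\ge q_i$ for every $i$, and since $t\mapsto 1+1/(t-2)$ is strictly decreasing,
\[
3<\prod_{i=1}^{k}\Bigl(1+\frac{1}{p_i-2}\Bigr)\le\prod_{i=1}^{k}\Bigl(1+\frac{1}{q_i-2}\Bigr) .
\]
The right-hand product increases with $k$, and an exact rational evaluation (the largest prime involved being $q_{47}=479$) shows $\prod_{i=1}^{47}\bigl(1+\tfrac{1}{q_i-2}\bigr)<3$. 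Hence $k\le 47$ is impossible and $\omega(n)=k\ge 48$.

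\emph{Where the difficulty is.} The only nonroutine ingredient is the finite evaluation in Step 2, which I would carry out in exact arithmetic; continuing the same computation would already yield larger lower bounds, so $48$ is not the ceiling of this elementary argument. The harder task is to extract substantially more from the extra structure of $D_3$ — ideally to settle the case $M=3$ entirely. Two such inputs are at hand: because $3\mid p_i-2$ for every $i$ one has $3^{k+1}\mid\phi(n)-1$, so $S_2(n)\ge 3^{k}$ and hence $3<\prod_{i=1}^{k}\bigl(1+\tfrac{1}{p_i-2}\bigr)\le 3+3^{-k}$, confining the product to a very thin window above $3$; and $p_i-2\mid\phi(n)-1$ gives $\prod_{j\ne i}(p_j-1)\equiv 1\pmod{p_i-2}$ for each $i$, constraints usable to forbid particular small primes $\equiv 2\pmod 3$ from dividing $n$. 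Marshalling these — equivalently, rerunning the argument behind the general bound $\omega(n)\ge 17$ with the admissible primes restricted to the class $2\pmod 3$ — is where I expect the real work to lie.
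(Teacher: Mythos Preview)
Your proof is correct and follows essentially the same approach as the paper: a modular-arithmetic argument to force every prime divisor into the class $2\pmod 3$, followed by the inequality $\prod_i(1+1/(p_i-2))>3$ bounded above by the product over the first $47$ primes $\equiv 2\pmod 3$. The only cosmetic difference is that the paper estimates that $47$-term product by computing the first ten factors exactly and bounding the remaining $37$ uniformly by $(1+1/81)^{37}$, whereas you invoke an exact evaluation up to $q_{47}=479$; either way the product lands below $3$.
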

We improve the lower bound for $\omega(n)$ for a Deaconescu number $n$ by proving the following theorem.
\begin{theorem}\label{thm2}
If $n$ is a Deaconescu number, then the number of distinct prime divisors of $n$ is at least seventeen, that is, $\omega(n)\geq 17$.    
\end{theorem}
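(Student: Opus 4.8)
The plan is to push the defining relation $\phi(n)-1=M\,S_2(n)$ through a congruence obstruction modulo $3$ together with an elementary size estimate, after disposing of the case $M=3$ via Theorem~\ref{thm1}. By Hasanalizade's results I may take $n=p_1p_2\cdots p_k$ with $3\le p_1<p_2<\cdots<p_k$ distinct odd primes and $k=\omega(n)$, so that $S_2(n)=\prod_{i=1}^k(p_i-2)$; fix $M\in\mathbb Z^{+}$ with $\phi(n)-1=M\,S_2(n)$. Both $\phi(n)-1$ and $S_2(n)$ are odd, so $M$ is odd, and since $n$ is composite Hasanalizade's determination of $D_1$ forces $M\neq1$; hence $M\ge3$. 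If $M=3$ then $n\in D_3$, and Theorem~\ref{thm1} immediately gives $\omega(n)\ge48\ge17$, so for the rest of the argument I assume $M\ge5$.

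The first main step is a residue dichotomy for the prime divisors of $n$: if some $p_j\equiv2\pmod3$ then $3\mid p_j-2\mid S_2(n)$, hence $3\mid\phi(n)-1$, i.e.\ $\phi(n)\equiv1\pmod3$; reading $\phi(n)=\prod_i(p_i-1)$ modulo $3$ then shows this forces no $p_i\equiv1\pmod3$ (such a factor would give $3\mid\phi(n)$) and $3\nmid n$ (the factor $\phi(3)=2$ times factors $p_i-1\equiv1$ would give $\phi(n)\equiv2$), i.e.\ it forces every prime divisor of $n$ to be $\equiv2\pmod3$. Equivalently: if $n$ has any prime divisor equal to $3$ or $\equiv1\pmod3$, then $n$ has no prime divisor $\equiv2\pmod3$. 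Splitting the latter alternative by whether $3\mid n$, I get three cases for the set of prime divisors: (I) $3$ together with primes $\equiv1\pmod3$; (II) all primes $\equiv1\pmod3$; (III) all primes $\equiv2\pmod3$.

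The second main step is the bound $M<\prod_{i=1}^k\frac{p_i-1}{p_i-2}$, immediate from $\phi(n)-1<\phi(n)$. Each factor $\frac{p-1}{p-2}=1+\frac1{p-2}$ exceeds $1$ and is decreasing in $p$, so if $k\le16$ this product is at most the product of the corresponding factors over the sixteen smallest primes admitted by the relevant case. A direct numerical check then shows that this maximal product is below $3.4$ in case (I) (namely over $3,7,13,19,31,37,43,61,67,73,79,97,103,109,127,139$), below $1.8$ in case (II) (the sixteen smallest primes $\equiv1\pmod3$), and below $2.1$ in case (III) (over $5,11,17,23,29,41,47,53,59,71,83,89,101,107,113,131$). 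In each case this is strictly less than $5\le M$, contradicting the displayed inequality; hence $k=\omega(n)\ge17$.

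The step I expect to carry the weight is the congruence dichotomy of the second paragraph: it is precisely the exclusion of a prime $\equiv1\pmod3$ coexisting with a prime $\equiv2\pmod3$, and of $3$ coexisting with a prime $\equiv2\pmod3$, that removes the small primes $5$ and $7$ from any case still containing $3$, and so makes the size estimate decisive --- without it the sixteen smallest odd primes already give a product exceeding $5$. The numerical verifications are then routine once the admissible prime lists are fixed, and the reduction $M=3\Rightarrow n\in D_3$ lets Theorem~\ref{thm1} handle the single value of $M$ for which the size estimate in case (I) is by itself not quite conclusive.
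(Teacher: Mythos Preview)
Your proof is correct and takes a genuinely different route from the paper's.

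The paper first uses the crude bound over the sixteen smallest odd primes to get $M<6$, hence $M\in\{3,5\}$; for $M=5$ it then invokes Lemma~\ref{l2} (no prime divisor $\equiv1\pmod{M}$) to restrict the admissible primes and obtains the product bound $<5$, while $M=3$ is handled by Theorem~\ref{thm1}. You instead prove a mod~$3$ structural dichotomy valid for \emph{every} Deaconescu number, coming from the $S_2$ side: a prime $\equiv2\pmod3$ forces $3\mid S_2(n)\mid\phi(n)-1$ and hence forbids primes $\equiv1\pmod3$ and the prime $3$. This splits the problem into three residue-controlled cases whose sixteen-term products are all below $5$, disposing of every $M\ge5$ simultaneously without first bounding $M$. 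The paper's method is the more extensible one --- tailoring the congruence obstruction to each candidate $M$ is exactly how one would push the bound on $\omega(n)$ higher --- whereas your argument trades that flexibility for a single uniform obstruction and a cleaner case analysis at this particular threshold. Both routes lean on Theorem~\ref{thm1} for $M=3$, where Case~(I) with its factor $2=\frac{3-1}{3-2}$ keeps the product above $3$ and the size estimate alone cannot conclude.
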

As a consequence of Theorem \ref{thm2}, we can produce a lower bound for a Deaconescu number $n$.
\begin{theorem}\label{thm3}
A Deaconescu number must be strictly larger than $5.86\cdot 10^{22}$. 
\end{theorem}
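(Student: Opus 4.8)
The plan is to derive the lower bound for a Deaconescu number $n$ directly from the structural information established in Theorem \ref{thm2}, namely that $n$ is odd, squarefree, and has $\omega(n) \geq 17$ distinct prime divisors. Since $n$ is odd and squarefree, the smallest possible value of $n$ with seventeen distinct prime factors is obtained by taking the product of the seventeen smallest odd primes. So the first step is simply to compute
\[
n \geq \prod_{i=2}^{18} p_i = 3 \cdot 5 \cdot 7 \cdot 11 \cdot 13 \cdot 17 \cdot 19 \cdot 23 \cdot 29 \cdot 31 \cdot 37 \cdot 41 \cdot 43 \cdot 47 \cdot 53 \cdot 59 \cdot 61,
\]
where $p_i$ denotes the $i$-th prime, and verify that this product exceeds $5.86 \cdot 10^{22}$.

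The only real work is the arithmetic: one multiplies the seventeen primes $3, 5, 7, \dots, 61$ together and checks the numerical inequality. I would carry this out in stages — for instance, grouping the factors into a few partial products — to make the computation transparent and easy to verify, and then compare the final integer against $5.86 \cdot 10^{22}$. A quick sanity estimate via logarithms (summing $\log_{10} p_i$ for the seventeen primes) confirms the order of magnitude is around $10^{22}$–$10^{23}$, so the stated bound is the honest one coming from this product rather than a lossy estimate.

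There is essentially no conceptual obstacle here; the result is an immediate corollary of Theorem \ref{thm2} together with the fact (from Hasanalizade \cite{new2022}, and recalled in Theorem \ref{thm2}'s proof) that a Deaconescu number is odd and squarefree. The only thing to be careful about is the exact value of the product and ensuring the inequality is strict — since $n$ is composite with exactly the property $MS_2(n) = \phi(n) - 1$ for some $M \geq 3$, it cannot literally equal a bare primorial-type product unless that product happens to satisfy the divisibility, but for the stated bound it suffices that $n$ is at least the product of the seventeen smallest odd primes, which already gives $n > 5.86 \cdot 10^{22}$.
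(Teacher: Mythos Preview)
Your proposal is correct and essentially identical to the paper's proof: both use Theorem~\ref{thm2} to obtain $\omega(n)\geq 17$, note that $n$ is odd and squarefree, and conclude $n \geq 3\cdot 5\cdots 61 > 5.86\cdot 10^{22}$. The only difference is cosmetic notation (the paper writes $\prod_{j=1}^{17} P_{j+1}$ where you write $\prod_{i=2}^{18} p_i$).
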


Throughout this paper, we use $p_{1},p_{2},\dots,p_{\omega(n)}$ to denote prime numbers.
\section{Preliminaries}
In this section, we prove some useful lemmas concerning Deaconescu numbers.
\begin{lemma}\label{l1}
If $n$ is a Deaconescu number having all prime divisors greater than or equal to $11$, then $\omega(n)\geq p^{*}$, where $p^{*}$ is the smallest prime divisor of $n$.
\end{lemma}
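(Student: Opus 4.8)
The plan is to extract a lower bound on $\omega(n)$ from nothing more than the size of the ratio $\phi(n)/S_2(n)$. First I would recall the structural facts already available: by Hasanalizade's theorem \cite{new2022} a Deaconescu number $n$ is odd and squarefree, so I may write $n=p_1p_2\cdots p_k$ with $k=\omega(n)$ and with each $p_i$ an odd prime, necessarily $p_i\ge p^{*}\ge 11$. Let $M$ be the positive integer with $n\in D_M$, that is, $MS_2(n)=\phi(n)-1$. Since $\phi(n)=\prod_i(p_i-1)$ is even while $S_2(n)=\prod_i(p_i-2)$ is odd, $M$ is odd; and $M\ne 1$ because $D_1$ contains no composite number. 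Hence $M\ge 3$. I want to flag at once that this step --- upgrading the trivial $M\ge 2$ to $M\ge 3$ via the parity of $M$ --- is the one genuinely essential ingredient; with only $M\ge 2$ the estimate below degrades to something strictly weaker than $\omega(n)\ge p^{*}$.

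The core step is then just the elementary chain of inequalities coming from $MS_2(n)=\phi(n)-1<\phi(n)$:
\[
3\;\le\;M\;<\;\frac{\phi(n)}{S_2(n)}\;=\;\prod_{i=1}^{k}\frac{p_i-1}{p_i-2}\;\le\;\left(\frac{p^{*}-1}{p^{*}-2}\right)^{k},
\]
where the last inequality uses that $x\mapsto\frac{x-1}{x-2}$ is decreasing on $(2,\infty)$ and $p_i\ge p^{*}$ for every $i$. I would finish by contradiction: if $\omega(n)=k\le p^{*}-1$, then raising the base (which exceeds $1$) to the larger exponent gives $3<\bigl((p^{*}-1)/(p^{*}-2)\bigr)^{p^{*}-1}$, so it remains only to know that $\bigl((p^{*}-1)/(p^{*}-2)\bigr)^{p^{*}-1}<3$ for every prime $p^{*}\ge 11$; this contradiction forces $k\ge p^{*}$.

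The only real computation --- and the place where the hypothesis $p^{*}\ge 11$ enters --- is this last numerical inequality. Writing $m=p^{*}-2\ge 9$ it becomes $(1+1/m)^{m+1}<3$, and here I would invoke the classical fact that the sequence $m\mapsto(1+1/m)^{m+1}$ is strictly decreasing with limit $e$; hence it suffices to verify the single case $m=9$, namely $(10/9)^{10}<3$, equivalently $10^{10}<3\cdot 9^{10}$, a one-line check. (For $p^{*}\in\{3,5\}$ this inequality is false, so the argument legitimately requires $p^{*}$ bounded away from the smallest primes, which is why the statement is phrased for $p^{*}\ge 11$.) I do not expect a genuine obstacle here: the entire proof reduces to the parity observation $M\ge 3$ together with the monotonicity of $(1+1/m)^{m+1}$ and the finite check at $m=9$.
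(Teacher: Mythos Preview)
Your proof is correct and follows essentially the same route as the paper: both derive $3\le M<\bigl(1+\tfrac{1}{p^{*}-2}\bigr)^{\omega(n)}$ and then use the monotone decrease of $t\mapsto(1+1/t)^{1+t}$ to force a contradiction when $\omega(n)\le p^{*}-1$. The only cosmetic differences are that the paper checks the numerical bound at $t=7$ (obtaining $(8/7)^8<3$) rather than at your $m=9$, and that you spell out the parity reason for $M\ge 3$ which the paper leaves implicit.
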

\begin{proof}
Let us consider a function $\tau: [7,\infty)\to \mathbb{R}$ defined by $\tau(t)=(1+\frac{1}{t})^{1+t}$. Then
$$\tau'(t)=(1+\frac{1}{t})^{1+t}\big\{ \log(1+\frac{1}{t})-\frac{1}{t}\big\}<0~~~\text{for}~~t\geq7,$$
therefore $\tau$ is a strictly decreasing function of $t$ in $[7,\infty)$ and thus for all $t\in [7,\infty)$
\begin{align}\label{e1}
\tau(t)=(1+\frac{1}{t})^{1+t}\leq (1+\frac{1}{7})^{8}=\tau(7)<3.
\end{align}
Let $n$ be a Deaconescu number with all prime divisors greater than or equal to $11$ and also let $p^{*}$ be the smallest prime divisor of $n$. Then
\begin{align*}
3\leq M=\frac{\phi(n)-1}{S_2(n)}<\frac{\phi(n)}{S_2(n)}&=\prod_{p\mid n}\big(\frac{p-1}{p-2}\big)\\&=\prod_{p\mid n}\big( 1+\frac{1}{p-2}\big)\\
&< \prod_{p\mid n}\big(1+\frac{1}{p^{*}-2}\big)=\big(1+\frac{1}{p^{*}-2}\big)^{\omega(n)}.
\end{align*}
If possible, suppose that $\omega(n)<p^{*}$, then setting $t=p^{*}-2$ in (\ref{e1}) we get
$$\big(1+\frac{1}{p^{*}-2}\big)^{\omega(n)}\leq \big(1+\frac{1}{p^{*}-2}\big)^{p^{*}-1}<\tau(7)<3,$$
it follows that $3\leq M<3$ which is a contradiction. Hence $\omega(n)\geq p^{*}$.
\end{proof}
\begin{lemma}\label{l2}
If $n$ is a Deaconescu number. Then no prime divisor of $n$ is congruent to $1$ modulo $M$.
\end{lemma}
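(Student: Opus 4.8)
The plan is to work modulo $M$ and pit the trivial congruence coming from the defining identity $MS_2(n)=\phi(n)-1$ against the fact that a prime dividing $n$ forces a corresponding factor into $\phi(n)$. Recall that, as noted in the introduction, a Deaconescu number lies in $D_M$ with $M\geq 3$, so I may fix such an $M\geq 3$ and argue by contradiction, assuming there is a prime $p\mid n$ with $p\equiv 1\pmod{M}$.

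First I would record the elementary observation that $p\mid n$ implies $(p-1)\mid \phi(n)$: writing $n=p^{a}m$ with $\gcd(p,m)=1$ and using multiplicativity of $\phi$ gives $\phi(n)=p^{a-1}(p-1)\phi(m)$, and in fact Hasanalizade's result tells us $n$ is squarefree, so even more simply $\phi(n)=\prod_{q\mid n}(q-1)$ with $p-1$ among the factors. Hence from $p\equiv 1\pmod{M}$ we get $M\mid p-1\mid \phi(n)$, i.e.\ $\phi(n)\equiv 0\pmod{M}$. On the other hand, the defining relation rearranges to $\phi(n)=MS_2(n)+1$, so $\phi(n)\equiv 1\pmod{M}$. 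Comparing the two congruences yields $M\mid 1$, contradicting $M\geq 3$; therefore no prime divisor of $n$ can be congruent to $1$ modulo $M$.

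I do not anticipate any real obstacle here: the whole argument is a single congruence once one spots that $p\mid n$ forces $(p-1)\mid\phi(n)$. The only point requiring a little care is the legitimacy of assuming $M\geq 3$, which is exactly the already-established fact that $D_1$ — and trivially $D_2$, since a Deaconescu number is odd and hence $M$ is odd — contains no composite numbers.
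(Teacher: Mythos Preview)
Your proof is correct and follows essentially the same route as the paper: both obtain $\phi(n)\equiv 1\pmod{M}$ from the defining relation and $\phi(n)\equiv 0\pmod{M}$ from the factor $p-1=\phi(p)$ inside $\phi(n)$, yielding the contradiction $0\equiv 1\pmod{M}$. The paper simply writes this as $0\equiv\phi(p)\phi(n/p)=\phi(n)\equiv 1\pmod{M}$ in one line, while you spell out the divisibility $M\mid p-1\mid\phi(n)$ and add the (correct but inessential) discussion of why $M\geq 3$.
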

\begin{proof}
Let $p$ be a prime which divides $n$ and suppose that $p\equiv 1 \pmod{M}$. Note that since $\phi(n)\equiv 1 \pmod{M}$ and $(p,n/p)=1$, it follows that
$$0\equiv \phi(p)\cdot \phi(n/p)=\phi(n)\equiv 1 \pmod{M},$$
a contradiction.
\end{proof}
\section{Proof of Theorem \ref{thm1}}
Let $n$ be a Deaconescu number such that $n\in D_3$. Then, from Lemma \ref{l2} we have $p\not\equiv 1 \pmod{M}$ for all prime divisors $p$ of $n$, where $M=\frac{\phi(n)-1}{S_2(n)}\geq 3$, thus in particular if $M=3$ then the prime divisors of $n$ belong to the set $\mathcal{P}=\{3,p: p \text{ is a prime ,~~} p\equiv2 \pmod{3}\}$. If possible, assume that $3\mid n$. Then 
$$n=3\cdot\prod_{i=2}^{\omega(n)}p_i,$$
where $p_{i}\geq 5$. Since $M=3$, $p_{i}=3k_{i}+2$, for some $k_{i}\in \mathbb{Z}^{+}$, for all $2\leq i\leq \omega(n)$. Therefore
$$3\cdot1\cdot \prod_{i=2}^{\omega(n)}\big(3k_i\big)=2\cdot\prod_{i=2}^{\omega(n)}\big(3k_{i}+1\big)-1$$
then
$$2\cdot\prod_{i=2}^{\omega(n)}\big(3k_{i}+1\big)\equiv 1 \pmod{3}$$
 that is
$$2\equiv 1 \pmod{3}$$
but this is impossible. Therefore $3\nmid n$.\\
Now suppose that $\omega(n)\leq 47$, then since $M=3$, if we write
$$n=\prod_{i=1}^{\omega(n)}p_i$$
then $p_{i}=3k_{i}+2$, for some $k_{i}\in \mathbb{Z}^{+}$, for all $1\leq i\leq \omega(n)$. Therefore
$$3\cdot \prod_{i=1}^{\omega(n)}\big(3k_i\big)=\prod_{i=1}^{\omega(n)}\big(3k_{i}+1\big)-1$$
which implies that
\begin{align}\label{3}
3=\prod_{i=1}^{\omega(n)}\big(1+\frac{1}{3k_{i}}\big)-\prod_{i=1}^{\omega(n)}\big(3k_{i}\big)^{-1}.    
\end{align}
Note that
\begin{align*}
\prod_{i=1}^{\omega(n)}\big(1+\frac{1}{3k_{i}}\big)-\prod_{i=1}^{\omega(n)}\big(3k_{i}\big)^{-1}&<\prod_{i=1}^{\omega(n)}\big(1+\frac{1}{3k_{i}}\big)\\
&\leq \prod_{i=1}^{47}\big(1+\frac{1}{3k_{i}}\big)\\
&< \prod_{\substack{k \in \mathcal{A}\\\mathcal{A}=\{k:~5\leq 3k+2\leq 71 \text{~~is prime}\}\\|\mathcal{A}|=10}}\big(1+\frac{1}{3k}\big)\cdot\big(1+\frac{1}{81}\big)^{37}<3
\end{align*}
which clearly contradicts (\ref{3}). Therefore, we must have $\omega(n)\geq 48$. This completes the proof.\qed
\section{Proof of Theorem \ref{thm2}}
 Assume that $n=p_1p_2\cdots p_{\omega(n)}$ with $2<p_1<p_2<\dots<p_{\omega(n)}$ is a Deaconescu number and $\omega(n)\leq 16$. Let $\mathcal{A}=\{p~:~p \text{~is a prime,~} 3\leq p\leq 59 \}$ and $\mathcal{A}^*=\{p~:~p\text{~is a prime,~}p\not\equiv 1 \pmod{5},\text{~and~} 3\leq p\leq 79\}$. Note that $\mathcal{A}$ and $\mathcal{A}^*$ each have the $16$ primes in the appropriate intervals. Now we see that
 $$3\leq M=\frac{\phi(n)-1}{S_2(n)}<\frac{\phi(n)}{S_2(n)}=\prod_{i=1}^{\omega(n)}\big(\frac{p_i-1}{p_i-2}\big)=\prod_{i=1}^{\omega(n)}\big(1+\frac{1}{p_i-2}\big)\leq \prod_{p\in \mathcal{A}}\big(1+\frac{1}{p-2}\big)<6.$$
 It follows that $M=3$ or $M=5$, if $M=5$ then we have
 $$5=M=\frac{\phi(n)-1}{S_2(n)}<\frac{\phi(n)}{S_2(n)}=\prod_{i=1}^{\omega(n)}\big(\frac{p_i-1}{p_i-2}\big)=\prod_{i=1}^{\omega(n)}\big(1+\frac{1}{p_i-2}\big)\leq\prod_{p\in \mathcal{A}^*}\big(1+\frac{1}{p-2}\big)<5,$$
 a contradiction. It follows that $M=3$ and thus, $n\in D_3$. But in this case from Theorem \ref{thm1}, we must have $\omega(n)\geq 48$, again a contradiction. Therefore, our assumption that $\omega(n)\leq 16$ is wrong, and hence we must have $\omega(n)\geq 17$.
\qed
\section{Proof of Theorem \ref{thm3}}
Let $n$ be a Deaconescu number. Then the number of distinct prime divisors of $n$ is greater than or equal to $17$ by Theorem \ref{thm2}. Therefore
$$n\geq \prod_{j=1}^{\omega(n)}P_{j+1},$$
where $P_j$ is the $j$-th prime number (e.g. $P_2=3,P_5=11$). Note that,
$$n\geq \prod_{j=1}^{\omega(n)}P_{j+1}\geq \prod_{j=1}^{17}P_{j+1}>5.86\cdot 10^{22}.$$
This completes the proof.\qed


\begin{thebibliography}{99}{\small

\bibitem{new2022}
E. Hasanalizade, On a conjecture of Deaconescu, 
\textit{Integers} \textbf{22} (2022), \#A99.
\bibitem{Burcsi2011}
P. Burcsi, S. Czirbusz, and G. Farkas, Computational investigation of Lehmer’s totient problem, 
\textit{Ann. Univ. Sci. Budapest. Sect. Comput.} \textbf{35} (2011), 43–49.

\bibitem{Burek2019}
D. Burek and B. \.{Z}mija, A new upper bound for numbers with the Lehmer property and its application to repunit numbers, 
\textit{Int. J. Number Theory} \textbf{15} (2019), no. 7, 1463–1468.

\bibitem{Cohen1980}
G. L. Cohen and P. Hagis, On the number of prime factors of $n$ if $\phi(n)|(n-1)$, 
\textit{Nieuw. Arch. Wisk. (3)} \textbf{28} (1980), no. 2, 177–185.

\bibitem{Deacon2000}
M. Deaconescu, Adding units mod $n$, 
\textit{Elem. Math.} \textbf{55} (2000), no. 3, 123–127.



\bibitem{Hernandez2008}
S. H. Hernandez and F. Luca, A note on Deaconescu’s result concerning Lehmer’s problem, 
\textit{Integers} \textbf{8} (2008), \#A12.

\bibitem{Lehmer1932}
D. H. Lehmer, On Euler’s totient function, 
\textit{Bull. Amer. Math. Soc.} \textbf{38} (1932), 745–751.

\bibitem{Nielsen2015}
P. Nielsen, Odd perfect numbers, Diophantine equations, and upper bounds, 
\textit{Math. Comp.} \textbf{84} (2015), 2549–2567.

\bibitem{Pomerance1977}
C. Pomerance, On composite $n$ for which $\phi(n)|n-1$, II, 
\textit{Pacific J. Math.} \textbf{69} (1977), no. 2, 177–186.}
\end{thebibliography}
\end{document}